\newtheorem{theorem}{Theorem}[section]
\newtheorem{claim}[theorem]{Claim}
\newtheorem{corollary}[theorem]{Corollary}
\newtheorem{conjecture}[theorem]{Conjecture}
\newenvironment{prf}{\noindent{\bf Proof.\,}}{\hfill$\qed$}
\newcommand{\cR}{{\mathcal R}}
\newcommand{\cF}{{\mathcal F}}
\newcommand{\eps}{\varepsilon}
\begin{document}
\title{An improved bound on the sizes of matchings guaranteeing a rainbow matching}
\author{Dennis Clemens}
\author{Julia Ehrenm\"uller}
\affil{\footnotesize\textit{Technische Universität Hamburg-Harburg, Institut f\"ur Mathematik, Am Schwarzenberg-Campus 3, 21073 Hamburg, Germany, \{dennis.clemens, julia.ehrenmueller\}@tuhh.de}}
\date{\vspace{-0.5cm}}


\maketitle
\begin{abstract}
A conjecture by Aharoni and Berger states that every family of $n$ matchings of size $n+1$ in a bipartite multigraph contains a rainbow matching of size $n$. In this paper we prove that matching sizes of $\left(\frac 3 2 + o(1)\right) n$ suffice to guarantee such a rainbow matching, which is asymptotically the same bound as the best known one in case we only aim to find a rainbow matching of size $n-1$. This improves previous results by Aharoni, Charbit and Howard, and Kotlar and Ziv. 
\end{abstract}

\section{Introduction}
In this paper we are concerned with the question which sizes of $n$ matchings in a bipartite multigraph suffice in order to guarantee a rainbow matching of size $n$.

One motivation for considering these kinds of problems is due to some well known conjectures on Latin squares. 
A \emph{Latin square} of order $n$ is an $n \times n$ matrix in which each symbol appears exactly once in every row and exactly once in every column. A \emph{partial transversal} in a Latin square is a set of entries with distinct symbols such that from each row and each column at most one entry is contained in this set. 
We call a partial transversal of size $n$ in a Latin square of order $n$ simply \emph{transversal}. 
A famous conjecture of Ryser~\cite{ryser1967} states that for every odd integer $n$ any Latin square of order $n$ contains a transversal. 
The conjecture is known to be true for $n \leq 9$. 
Omitting the restriction to odd numbers yields a false statement. 
Brualdi~\cite{brualdi1991, denes1974} and Stein~\cite{stein1975} independently formulated the following conjecture for all orders $n$.
\begin{conjecture}
\label{conj:brualdi}
For every $n \geq 1$ any Latin square of order $n$ has a partial transversal of size $n-1$. 
\end{conjecture}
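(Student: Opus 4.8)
The plan is to recast Conjecture~\ref{conj:brualdi} in the rainbow-matching language used throughout this paper and then to attack it by a greedy argument reinforced with a switching/averaging step; I should say at once, though, that this is one of the central open problems of the subject, so what follows is the framework in which the strongest known partial results live rather than a complete proof.

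First I would fix the dictionary. Given a Latin square $L$ of order $n$, form the bipartite graph with one vertex class $X=\{x_1,\dots,x_n\}$ for the rows and one class $Y=\{y_1,\dots,y_n\}$ for the columns, and for each symbol $s\in\{1,\dots,n\}$ let $M_s$ be the set of edges $x_iy_j$ for which the $(i,j)$-entry of $L$ equals $s$. Since $s$ occurs exactly once in each row and each column, every $M_s$ is a perfect matching, so $L$ yields a family $\cF=\{M_1,\dots,M_n\}$ of $n$ matchings, each of size $n$, and a partial transversal of size $k$ is precisely a rainbow matching of size $k$ for $\cF$. The goal thus becomes: every family of $n$ perfect matchings of size $n$ in a bipartite graph admits a rainbow matching of size $n-1$.

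Second, I would run the naive greedy process: repeatedly pick an edge from an as-yet-unused matching that is disjoint from all edges chosen so far. After $t$ steps the chosen edges cover $2t$ vertices, so every unused matching still offers at least $n-2t$ admissible edges, and the process only stalls once $t$ is roughly $n/2$. To push past $n/2$ one upgrades ``always extend'' to a rotation/switching argument: if the current rainbow matching $\cR$, of size $k<n-1$, cannot be extended, one double-counts (over the unused symbols, and over which edge of $\cR$ is released) the number of available augmenting rotations; the Woolbright and Brouwer--de Vries--Wieringa analyses show this count is positive whenever $k\le n-O(\sqrt n)$, and the Hatami--Shor refinement of the same idea brings the deficiency down to $O(\log^2 n)$. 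Everything up to here is elementary and I could carry it out in detail.

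The main obstacle --- and the reason the conjecture resists elementary methods --- is exactly the last stretch from $n-o(n)$ up to $n-1$: near the top very few rotations are available and the error terms in the double counting swamp the gain, so no purely greedy or averaging scheme is known to close the gap. A genuine proof seems to need a global ingredient: either a topological-connectivity bound for an independence complex in the spirit of Aharoni, Berger and Ziv, or an (iterative) absorption argument that reserves a flexible pool of rows, columns and symbols and uses it to repair an almost-transversal --- the route taken in recent work of Montgomery and others on transversals in Latin squares. Providing such an ingredient is the crux, and it lies well outside the elementary toolkit of the present paper, whose actual contribution is the $\left(\tfrac32+o(1)\right)n$ bound for rainbow matchings of size $n$ established in the later sections by iterated augmentation.
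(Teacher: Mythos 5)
The statement you were asked about is Conjecture~\ref{conj:brualdi}, the Brualdi--Stein conjecture. The paper does not prove it --- it is stated as an open conjecture and serves only as motivation for the rainbow-matching problem that the paper actually addresses --- and your proposal does not prove it either, as you yourself say. So strictly speaking there is a gap, and the gap is the entire argument: the reformulation as a rainbow-matching statement for $n$ perfect matchings is correct (it is exactly the translation given in the paper's introduction), the greedy step gets you to size roughly $n/2$, and the Woolbright / Brouwer--de Vries--Wieringa switching argument gets you to $n-O(\sqrt n)$ (with Hatami--Shor improving this to $n-O(\log^2 n)$), but none of this closes the distance to $n-1$, and you correctly identify that no elementary double-counting scheme is known to do so.

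As an assessment of the mathematical landscape your write-up is accurate and honest: you locate the difficulty in the right place (the last $o(n)$ of the transversal, where too few rotations are available), and you correctly name the kinds of global tools --- topological connectivity of independence complexes, or iterative absorption as in Montgomery's recent work --- that go beyond the toolkit of this paper. But be aware that what you have written is a survey of partial results, not a proof, and it should not be presented as a proof of the conjecture. Within the context of this paper the correct answer is simply that Conjecture~\ref{conj:brualdi} is open and is not established here; the paper's own contribution is the weaker (but unconditional) bound $f(n)\leq\left(\frac{3}{2}+\eps\right)n$ of Theorem~\ref{thm:main}, proved by the iterated switching construction in Section~2.
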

A natural way to transfer this problem to graphs is the following. Let $L=(\ell_{i,j})_{i,j\in[n]}$ be a Latin square of order $n$. We define $G_{L}:=(A\cup B, E)$ as the complete bipartite edge-coloured graph with partite sets $A=\{a_1,\ldots,a_n\}$ and $B=\{b_1,\ldots,b_n\}$, where $a_ib_j$ is coloured $\ell_{i,j}$. That is, $A$ and $B$ represent the columns and rows of $L$, respectively. Moreover, a transversal of $L$ corresponds to a perfect matching in $G_L$ that uses each edge colour exactly once, which we call a \emph{rainbow matching} of size $n$. Using this notion, Conjecture~\ref{conj:brualdi} is equivalent to the following: For every $n\geq 1$ any complete bipartite edge-coloured graph, the colour classes of which are perfect matchings, contains a rainbow matching of size $n-1$. One may wonder whether this might even be true in the more general setting of bipartite edge-coloured multigraphs. 

Following Aharoni, Charbit and Howard~\cite{aharoni2015}, we define $f(n)$ to be the smallest integer $m$ such that 
every bipartite edge-coloured multigraph with exactly $n$ colour classes, each being a matching of size at least $m$,
contains a rainbow matching of size $n$. Aharoni and Berger~\cite{aharoni2009} conjectured
the following generalization of Conjecture~\ref{conj:brualdi}.
\begin{conjecture}
\label{conj:aharoni}
For every $n\geq 1$ we have $f(n) = n+1$. 
\end{conjecture}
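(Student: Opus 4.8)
Conjecture~\ref{conj:aharoni} is open, so what follows is a plan of attack rather than a finished proof; I separate the routine lower bound from the substantive upper bound and indicate where the genuine difficulty lies.

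\textbf{The lower bound $f(n)\ge n+1$} is elementary, and I would record it first. For $n\ge 2$ take $n-1$ colour classes all equal, as edge sets, to the perfect matching $\{a_1b_1,\dots,a_nb_n\}$ of $K_{n,n}$ (so in the multigraph each $a_ib_i$ occurs with $n-1$ parallel copies, one per colour), and let the last colour class be the shifted perfect matching $\{a_1b_2,a_2b_3,\dots,a_{n-1}b_n,a_nb_1\}$. A rainbow matching of size $n$ would have to pick from the first $n-1$ classes $n-1$ pairwise disjoint edges among the $a_ib_i$, hence all of them except one, say $a_jb_j$; it would then need an edge of the last class incident to both $a_j$ and $b_j$, that is, the edge $a_jb_j$, which is not present (the last class contains $a_jb_{j+1}$ and $a_{j-1}b_j$, indices modulo $n$). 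Hence no rainbow matching of size $n$ exists and matchings of size $n$ never suffice, so $f(n)\ge n+1$.

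\textbf{The upper bound $f(n)\le n+1$ is the content of the conjecture.} I would let $\cF=\{M_1,\dots,M_n\}$ with $|M_i|=n+1$ for all $i$, take a maximum partial rainbow matching $R$ using the colours indexed by $I$ with $|I|=k$, and aim to prove $k=n$; so suppose $k\le n-1$. The tool is alternating-path augmentation: for an unused colour $j\notin I$ every edge of $M_j$ must meet the $2k\le 2n-2$ vertices of $R$, for otherwise $R$ could be enlarged; one then tries to grow an alternating walk starting at such an edge, alternately leaving $R$ and re-entering it through an edge of the colour just used, and finishing at an edge disjoint from everything seen, which yields a rainbow matching of size $k+1$. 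The quantitative task is to show, by counting the many near-free edges in each $M_j$ and how the colour classes overlap $R$, that some augmentation must succeed unless the colours of $R$ sit in a rigid extremal pattern, which is then analysed directly. This is precisely the scheme behind the successively sharper constants obtained by Aharoni, Charbit and Howard, by Kotlar and Ziv, and in the present paper.

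\textbf{The main obstacle} is that this counting becomes wasteful when $k$ is close to $n$: the $2k$ vertices of $R$ can then meet almost every edge of each $M_j$, and a single alternating step may be blocked by a colour already nearly saturated by $R$, so short augmenting paths need not exist. The topological route via connectivity of matching complexes (Aharoni, Berger and Ziv) gives only $f(n)\le 2n-1$, and the known alternating-path refinements, this one included, all seem to lose a constant factor and stall near $\tfrac{3}{2}n$. I therefore do not expect a short proof of the exact bound $n+1$: closing the gap appears to require either a substantially stronger Hall-type connectivity theorem for matching complexes of bounded-degree graphs, or --- at least for large $n$ --- an absorption or random-greedy scheme of the kind used in recent work on transversals in Latin squares, together with a finishing argument that replaces the resulting $o(n)$ error by the exact value and settles the finitely many small $n$ separately. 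The plan above is the channel through which incremental progress, including the bound proved here, is made.
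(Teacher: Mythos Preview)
Your assessment is correct: Conjecture~\ref{conj:aharoni} is open, and the paper does not attempt to prove it. What the paper proves is only the weaker Theorem~\ref{thm:main}, namely $f(n)\le(3/2+\eps)n$ for large $n$, and it does so by exactly the augmentation scheme you sketch: assume a maximum rainbow matching $R$ has size $n-1$, build sequences of edges alternately inside and outside $R$ together with growing vertex sets $X_k,Y_k\subseteq V(R)$, and count to show these sets must exceed $n$ vertices, a contradiction. So your description of the upper-bound strategy and its bottleneck matches the paper's own method.

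Your lower-bound construction for $n\ge 2$ is correct and standard. One small remark: as literally stated the conjecture fails at $n=1$, since a single nonempty matching already contains a rainbow matching of size~$1$ and hence $f(1)=1$; this is a quirk of the statement rather than of your argument, and your restriction to $n\ge 2$ is the right move.
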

The first approaches towards this conjecture are given by the bounds $f(n)\leq \left\lfloor \frac 7 4 n\right\rfloor$ due to Aharoni, Charbit and Howard~\cite{aharoni2015} and $f(n) \leq \left\lfloor \frac 5 3 n\right\rfloor$ due to Kotlar and Ziv~\cite{kotlar2014}. Here, we give an improved bound, which is asymptotically the same as the best known bound on the sizes of the colour classes in case we aim to find a rainbow matchings of size $n-1$~\cite{kotlar2014}. In particular, we prove the following.

\begin{theorem}
\label{thm:main}
For every $\eps >0$ there exists  an integer $n_0\geq 1$ such that for every $n \geq n_0$ we have $f(n)\leq \left(\frac{3}{2} + \eps\right)n$.
\end{theorem}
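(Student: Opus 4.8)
The plan is to argue by contradiction. Suppose the theorem fails for some $\eps>0$: then for arbitrarily large $n$ there is a bipartite edge-coloured multigraph $G$ with exactly $n$ colour classes, each a matching of size at least $m:=\lceil(\tfrac32+\eps)n\rceil$, but with no rainbow matching of size $n$. Fix such a $G$ with $n$ large and let $R=\{e_1,\dots,e_k\}$ be a rainbow matching of maximum size, where $e_i$ has colour $c_i$; by assumption $k\le n-1<m$. The starting point is the elementary observation that for every colour $c$ not used by $R$, every edge of the matching $F_c$ must meet $V(R)$, since otherwise $R$ together with such an edge would be a larger rainbow matching. As $F_c$ is a matching and $|V(R)|=2k$, this already yields $m\le|F_c|\le 2k$, and the whole point of the proof is to improve the factor $2$ to $\tfrac32+o(1)$.

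To that end, fix one colour $c_0$ not used by $R$ and split $F_{c_0}$ into its \emph{internal} edges (both endpoints in $V(R)$), say $p$ of them, and its \emph{boundary} edges (exactly one endpoint in $V(R)$), say $q$ of them. Since $F_{c_0}$ is a matching, the $2p+q$ vertices of $V(R)$ that it covers are distinct, so $2p+q\le 2k$, and therefore
\[
m\le |F_{c_0}|=p+q\le k+\tfrac{q}{2}.
\]
Hence it suffices to show that $q\le(1+o(1))k$, i.e.\ that an unused colour class cannot have many boundary edges — informally, that one cannot escape $V(R)$ too often. The heart of the argument is a rotation/augmentation scheme that does this. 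Given a boundary edge $a\,y\in F_{c_0}$ with $y\notin V(R)$ and $a$ an endpoint of $e_i=ab_i\in R$, consider the rotated matching $R'=R-e_i+ay$: it is again a rainbow matching of maximum size, colour $c_i$ is now unused, and $V(R')=(V(R)\setminus\{b_i\})\cup\{y\}$ differs from $V(R)$ in a single vertex. Re-applying the elementary observation to $F_{c_i}$ and $R'$ constrains the edges of $F_{c_i}$ incident to $b_i$, and iterating such rotations produces an alternating structure which, by maximality of $R$, can never close up into an augmenting configuration. Choosing $R$ from the outset to be extremal with respect to a suitable secondary parameter — for instance minimising the total number of boundary edges summed over all unused colours, or a lexicographic rule controlling the rotations — forces this process to terminate and yields $q\le(1+o(1))k$. (When $k\le n-2$ there are at least two unused colour classes, each of size $\ge m$ and essentially confined to the $2k$ vertices of $V(R)$, and one may additionally exploit that a graph on $2k$ vertices of maximum degree $2$ has at most $2k$ edges.) Combining the estimates gives $m\le k+\tfrac{q}{2}\le(\tfrac32+o(1))k\le(\tfrac32+o(1))n$, which for $n$ large contradicts $m\ge(\tfrac32+\eps)n$; hence $G$ has a rainbow matching of size $n$, proving $f(n)\le(\tfrac32+\eps)n$ for all sufficiently large $n$.

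I expect the rotation bookkeeping to be the main obstacle. Guaranteeing that the iterated rotations terminate, controlling how the constraints imposed by different unused colours interact as colours are swapped in and out of the matching, and handling parallel edges of the multigraph are precisely the points where the crude counting above must be sharpened; the residual slack that survives this analysis is what forces the hypothesis $(\tfrac32+\eps)n$ rather than $\tfrac32 n$ and the threshold $n_0=n_0(\eps)$.
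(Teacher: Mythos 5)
Your setup (contradiction, maximum rainbow matching $R$, the observation that every edge of an unused colour class must meet $V(R)$, and the split of $F_{c_0}$ into $p$ internal and $q$ boundary edges giving $m\le k+q/2$) is fine, and the general flavour — rotations/alternating structures exploiting maximality of $R$ — is indeed the right one. But the entire content of the theorem is concentrated in your claim that $q\le(1+o(1))k$, and that claim is not proved: the passage from ``iterating such rotations produces an alternating structure which can never close up'' to the quantitative bound on $q$ is exactly the missing argument, not bookkeeping. Worse, the claim as stated is doubtful for a single unused colour class. Maximality of $R$ only forbids edges of $F_{c_0}$ between $A\setminus X$ and $B\setminus Y$ (where $X\subseteq A$, $Y\subseteq B$ are the $R$-saturated sides); it permits up to $|X|=n-1$ boundary edges into $B\setminus Y$ and up to $|Y|=n-1$ boundary edges from $A\setminus X$ simultaneously, i.e.\ $q$ as large as $2n-2$, and no single rotation rules this out. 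Your fallback remark about ``at least two unused colour classes when $k\le n-2$'' is moot, since by Kotlar--Ziv one may take $k=n-1$, leaving exactly one unused class.

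The paper's proof does not upper-bound the boundary edges of one colour class at all. Instead it uses the $\ge(1/2+\eps)n+1$ boundary edges of $F_0$ on the $Y$-side to seed an iterative construction over $t\approx 1/(2\eps)$ rounds: in each round one passes to the colour classes meeting $R$ inside a current set $Y_k\subseteq Y$, shows via alternating-path switches (your rotations, but with careful control of which auxiliary vertices $z_i$ are reused — Properties (P1)--(P7)) that each such class must send many edges from a controlled set $X_k'\subseteq X$ into $B\setminus Y$, and then a pigeonhole step grows $Y_k$ by roughly $2\eps n$. The contradiction is that after $t$ rounds $|Y_t|>n-1=|Y|$. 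So the $3/2$ bound emerges from an aggregate, multi-colour cascade gaining only $O(\eps n)$ per round, not from a one-shot bound on $q$; to repair your proposal you would need to replace the unproved claim $q\le(1+o(1))k$ with such an iteration, together with an invariant guaranteeing that the rotated edges chosen in different rounds have pairwise distinct endpoints outside $X$ (so that the final switch really produces a matching).
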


Subsequently, we use the following notation. Let $G$ be a bipartite multigraph with partite sets $A$ and $B$ and let $R$ be a matching in $G$. For a set $X \subseteq A$ we denote by $N_G(X|R):= \{y \in B: \exists xy\in R \text{ with } x \in X\}$ the neighbourhood of $X$ with respect to $R$. For the sake of readability, we omit floor and ceiling signs and do not intend to optimize constants in the proofs.

\section{Proof of Theorem~\ref{thm:main}}

In this section we give a proof of Theorem~\ref{thm:main} the idea of which can be summarized as follows. We start with assuming for a contradiction that a maximum rainbow matching in the given graph $G=(A\cup B,E)$ is of size $n-1$. A rainbow matching of this size is known to exist~\cite{kotlar2014}. We fix such a matching $R$ and find two sequence $e_1,\ldots,e_k$ and $g_1,\ldots,g_k$ of edges, the first consisting of edges from $R$ and the second consisting of edges outside $R$. 
We then show that either 
we can switch between some of the edges from the edge sequences to produce a rainbow matching of size $n$
(see the proofs of the Claims \ref{claim1}, 
\ref{claim2} and \ref{claim3}), 
or the matchings represented by the edges $e_1,\ldots,e_k$ need to touch at least $n$ vertices in $B$ that are saturated by $R$, both leading to a contradiction. To make the second case more precise we additionally introduce in the proof certain sequences
$X_1,\ldots,X_k\subseteq A$ and $Y_1,\ldots,Y_k\subseteq B$. 

\begin{prf}
Let $\eps >0$ be given and whenever necessary we may
assume that $n$ is large enough. Let $\cF=\{F_0,\ F_1,\ \ldots,\ F_{n-1}\}$ be a family of $n$ matchings of size at least $(3/2 +\varepsilon)n$ in a bipartite multigraph $G=(A\cup B,E)$ with partite sets $A$ and $B$. We aim to find a rainbow matching of size $n$. 

For a contradiction, let us assume that there is no such matching. As shown in~\cite{kotlar2014}, there must exist a rainbow matching $R$ of size  $n-1$. We may assume without loss of generality that none of the edges of $F_0$ appears in $R$. 
Let $t$ be the smallest positive integer with $1/(2t-1)\leq \varepsilon$. Moreover, let
$X\subseteq A$ and $Y\subseteq B$ be the sets of vertices that are saturated by $R$, i.e.~incident with some edge of $R$.

In the following we show that for every $k \in [t]$ 
we can construct sequences
\begin{enumerate}[label=(S\arabic*)]
\item $e_1,\ldots,e_k$ of $k$ distinct edges $e_i=x_iy_i$ in $R$ with $x_i\in X$ and $y_i\in Y$,
\item $g_1,\ldots,g_k$ of $k$ distinct edges $g_i=z_iy_i$ with $z_i\in A\setminus X$,
\item $ X_1, \ldots, X_k$ of subsets of $X$,
\item $ Y_1, \ldots, Y_k$ of subsets of $Y$,
\end{enumerate}
and an injective function $\pi: \{0,1,\ldots,k\} \rightarrow \{0,1,\ldots,n-1\}$ with $\pi(0):=0$
such that the following properties hold:
\begin{enumerate}[label=(P\arabic*)]
\item\label{color_e} for each $ i\in [k]$ we have $e_i\in F_{\pi(i)}$,
\item\label{color_g} for each $ i\in [k]$ we have $g_i\in \bigcup_{j=0}^{i-1} F_{\pi(j)}$,
\item\label{disjoint} $(e_1\cup\ldots\cup e_k) \cap (X_k\cup Y_k)=\varnothing$,
\item\label{sizes} $|X_k|=|Y_k|=s_k:=2k\varepsilon n + k(7-3k)/2$,
\item\label{many_colors} for each $ i\in [k]$ and each $j\in\{0,\ldots,n-1\}$ it holds that if
$R$ contains an edge of the matching $F_j$ between $X_i$ and $Y_i$,
then there is also an edge of $F_j$ between $x_i$ and $B\setminus Y$,
\item\label{good_edges} for each $i\in [k]$ and each $w\in Y_i\setminus Y_{i-1}$ there exists a vertex $v\in A\setminus (X\cup \{z_1,\ldots,z_{i-1}\})$ such that $vw\in F_{\pi(i-1)}$ (where $Y_0 := \varnothing$), and
\item\label{different_endpoints} for each $i\in [k]$ and each $j\in [i-1]$ it holds that if $g_i\in F_{\pi(j)}$, then $z_i\in A\setminus (X\cup \{z_1,\ldots,z_j\})$.
\end{enumerate}

Before we start with the construction, let us first observe that by Property~\ref{sizes} we have a set $Y_t\subseteq Y$ which satisfies $2t\varepsilon n + t(7-3t)/2= |Y_t| \leq |Y| <n$. However, for large enough $n$ and by the choice of $t$ we have that $2t\varepsilon n + t(7-3t)/2 > n$, a contradiction. 

In order to find the sequences described above, we proceed by induction on $k$.
For the base case, let us argue why we find edges $e_1$, $g_1$, sets $X_1$, $Y_1$, and an injective function $\pi$ with Properties~\ref{color_e}-\ref{different_endpoints}. 
First observe that $F_0$ does not have any edges between $A\setminus X$ and $B \setminus Y$, by assumption on $R$. 
As $|F_0| \geq(3/2+\eps)n$, there are at least $(1/2+\eps)n+1$ edges of $F_0$ between $A\setminus X$ and $Y$. 
Let $N_0 \subseteq Y$ denote a set of size $(1/2+\eps)n+1$ such that for every vertex $w \in N_0$ there exists a vertex $v\in A\setminus X$ such that $vw \in F_0$. 
Furthermore, let $X_1':= N_G(N_0|R)$ and let $\cR_1 := \{F_j \in \cF: F_j \cap R[N_0, X_1']\neq \varnothing\}$. 

Let $F$ be any matching in $\cR_1$, let $vw$ be the unique edge in $R[N_0, X_1'] \cap F$ and let $z \in A\setminus X$ be the unique vertex such that $zw \in F_0$. 
Notice that there cannot be any edge $g$ of $F$ between $A\setminus (X \cup \{z\})$ and $B\setminus Y$,
since otherwise $(R\setminus \{vw\})\cup \{zw,g\}$ would give a rainbow matching of size $n$, in contradiction 
with $R$ being a maximum rainbow matching. 
Therefore, there are at least $(1/2+\eps)n + 1$ edges of $F$ between $B \setminus Y$ and $X \cup \{z\}$. 
Since $|X_1'| = (1/2+\eps)n+1$, there are at least $2\eps n +2$ edges of $F$ between $B\setminus Y$ and $X_1'$. 
Since this is true for any $F \in \cR_1$, we know by the pigeonhole principle that there is a vertex $x_1 \in X_1'$ and a subset $X_1 \subseteq X_1'$ of size $2\eps n +2$ such that, for every $F_j\in \cF $, if $F_j \cap R[X_1, N_G(X_1|R)]\neq \varnothing$ then $F_j$ has an edge between $x_1$ and $B\setminus Y$. Note that $x_1\notin X_1$. Let $e_1= x_1y_1$ be the unique edge in $R$ incident with $x_1$ and let $g_1 = z_1y_1$ be the unique edge of $F_0$ incident with $y_1 \in N_0$. Set $\pi(1)$ to the unique index $j\in [k]$ such that $e_1\in F_j$. One can easily verify that $e_1 = x_1y_1$, $g_1 = z_1y_1$, $X_1$, $Y_1 := N_G(X_1|R)$, and $\pi$ satisfy Properties~\ref{color_e}-\ref{different_endpoints}. 

For the induction hypothesis let us assume that for some $k \in [t-1]$ the above sequences are given with Properties~\ref{color_e}-\ref{different_endpoints}. We now aim
to extend these by edges $e_{k+1}, g_{k+1}$, sets $X_{k+1}, Y_{k+1}$, and a value $\pi(k+1)$ while maintaining
Properties~\ref{color_e}-\ref{different_endpoints}. We start with some useful claims.

\begin{claim}\label{claim1}
$F_{\pi(k)}$ has no edge between $A\setminus (X\cup \{z_1,\ldots,z_k\})$ and $B\setminus Y$.
\end{claim}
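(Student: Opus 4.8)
The plan is to argue by contradiction in the same spirit as the base case: if $F_{\pi(k)}$ had an edge $g$ between $A\setminus(X\cup\{z_1,\ldots,z_k\})$ and $B\setminus Y$, I would try to upgrade the current rainbow matching $R$ of size $n-1$ to one of size $n$, contradicting maximality. The key point is that $F_{\pi(k)}$ is precisely the colour class of $e_k=x_ky_k\in R$ (by~\ref{color_e}), so $g$ and $e_k$ share the colour $F_{\pi(k)}$ and cannot both be used. The natural fix is to first replace $e_k$ by $g_k=z_ky_k$, which frees up the colour $F_{\pi(k)}$, and then add $g$. Concretely, consider $R' := (R\setminus\{e_k\})\cup\{g_k, g\}$.

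First I would check that $R'$ is a matching. The edge $g_k=z_ky_k$ meets $R\setminus\{e_k\}$ only potentially at $z_k$; but $z_k\in A\setminus X$ by~\ref{color_g}, so $z_k$ is unsaturated by $R$ and no conflict arises. The edge $g$ lies between $A\setminus(X\cup\{z_1,\ldots,z_k\})$ and $B\setminus Y$, so its $B$-endpoint is unsaturated by $R$ (hence avoids $y_k$ and all of $R\setminus\{e_k\}$), and its $A$-endpoint is not in $X$ and is different from $z_k$, so it also avoids $g_k$. Thus $R'$ is a matching of size $(n-1)-1+2 = n$.

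Next I would check that $R'$ is rainbow, i.e.\ that its $n$ edges carry $n$ distinct colours. The edges of $R\setminus\{e_k\}$ are rainbow and avoid the colour $F_{\pi(k)}$ (since $e_k$ was the unique $R$-edge of that colour and $R$ is rainbow), and they also avoid colour $F_0$ by our assumption on $R$. Now $g$ has colour $F_{\pi(k)}$ and $g_k$ has some colour in $\bigcup_{j=0}^{k-1}F_{\pi(j)}$ by~\ref{color_g}; so a priori $g_k$ might clash with one of the old $e_j$'s. This is the one point that needs care, and it is exactly what Property~\ref{different_endpoints} is designed to handle. Indeed, I expect the actual statement being proved here is subtler: rather than using $g_k$ blindly, one removes from $R$ not just $e_k$ but also $e_j$ where $g_k\in F_{\pi(j)}$ (if $j\geq 1$), and by~\ref{different_endpoints} we have $z_k\notin\{z_1,\ldots,z_j\}$ and $z_k\notin X$, keeping everything disjoint; iterating this "shift along a chain'' $g_k\to e_j\to g_j\to\cdots$ terminates because $\pi$ is injective and the indices strictly decrease, eventually reaching colour $F_0=F_{\pi(0)}$, whose class has no edge between $A\setminus X$ and $B\setminus Y$ — but the final $g$ we are trying to add does lie there, so we again reach a contradiction with $R$ being maximum.

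The main obstacle, then, is bookkeeping the chain of substitutions so that (a) the resulting edge set stays a matching — which is where the $A$-endpoints $z_i$ being outside $X$ and mutually controlled by~\ref{different_endpoints} is essential — and (b) the colour multiset strictly loses one colour while gaining the colour of $g$, so the size genuinely goes up by one. Once the chain is set up correctly, each individual disjointness and colour check is routine, and the contradiction with the maximality of $R$ is immediate.
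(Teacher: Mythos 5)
Your proposal is correct and follows essentially the same route as the paper: assume such an edge $g$ exists, trace the chain $g_k\in F_{\pi(j_1)}$, $g_{j_i}\in F_{\pi(j_{i+1})}$ down to colour $F_0$ using Property~\ref{color_g}, invoke Property~\ref{different_endpoints} to keep the vertices $z_k,z_{j_1},\ldots$ distinct, and swap $\{e_k,e_{j_1},\ldots\}$ for $\{g_k,g_{j_1},\ldots,g\}$ to obtain a rainbow matching of size $n$, contradicting maximality. Your closing remark about $F_0$ having no edge between $A\setminus X$ and $B\setminus Y$ is not the reason the argument closes (the chain simply terminates at the unused colour $F_0$), but this does not affect the correctness of the plan.
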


\begin{proof}[Proof of Claim~\ref{claim1}]
Assume for a contradiction that there exists an edge $g\in F_{\pi(k)}$ between the sets $A\setminus (X\cup \{z_1,\ldots,z_k\})$ and $B\setminus Y$. (See Figure~\ref{fig:cl1} for an illustration.) By Property~\ref{color_g} we find a sequence $k>j_1>j_2>\ldots>j_s=0$ with $1\leq s\leq k$ such that
\begin{align*}
g_k  & \in  F_{\pi(j_1)}\ , \\
g_{j_i} & \in  
F_{\pi(j_{i+1})} \text{\ \ for }i< s.
\end{align*}
Moreover, according to Property~\ref{different_endpoints} we know that $z_k,z_{j_1},\ldots,z_{j_{s-1}}$ are distinct,
and thus, also using Property~\ref{color_e}, we conclude that
$$(R\setminus \{e_k,e_{j_1},\ldots,e_{j_{s-1}}\})\cup \{g_k,g_{j_1},\ldots,g_{j_{s-1}},g\}$$
forms a rainbow matching which is larger than $R$, a contradiction.
\begin{figure} [htbp]
\centering
\includegraphics[scale=0.55]{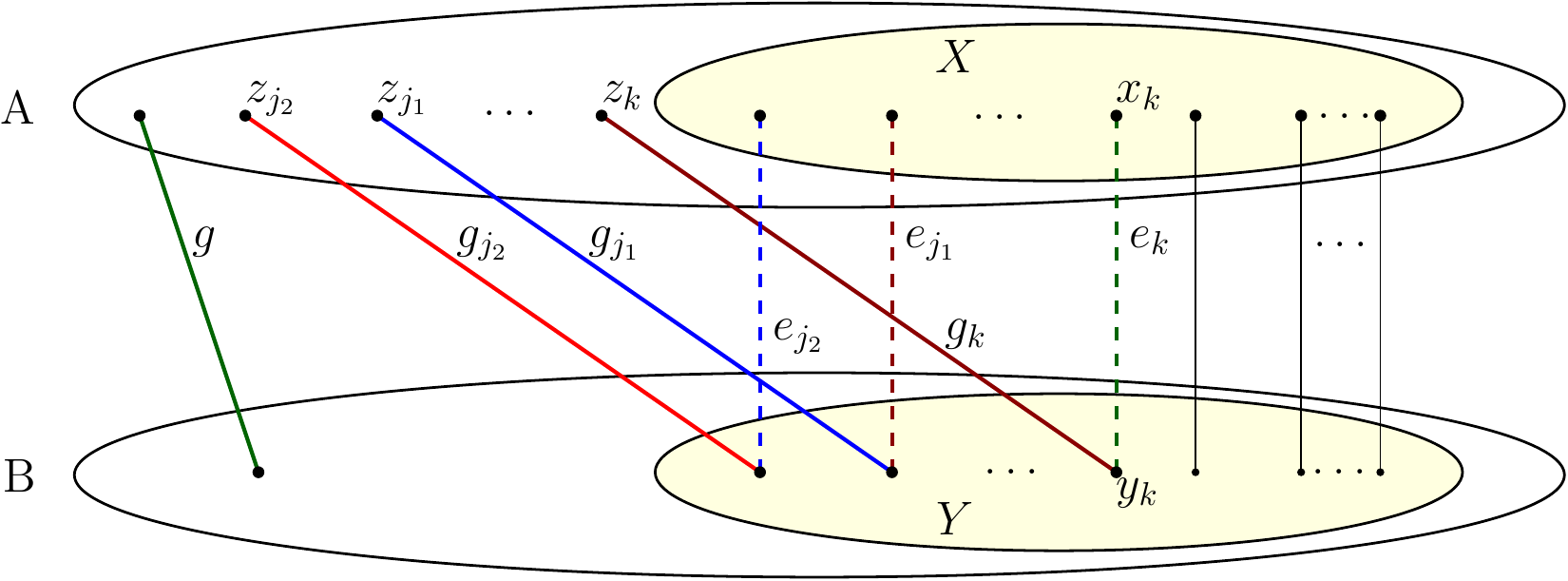}\label{fig:cl1}
 \caption{\em Example with $g_{j_2}\in F_{\pi(0)}$ ($s=3$). The dotted edges 
$\{e_k,e_{j_1},e_{j_2}\}$ are replaced by the edges $\{g_k,g_{j_1},g_{j_2},g\}$
to obtain a larger rainbow matching.}
\end{figure}
\end{proof}

\begin{claim}\label{claim2}
$F_{\pi(k)}$ has no edge between $A\setminus (X\cup \{z_1,\ldots,z_k\})$ and $Y_k$.
\end{claim}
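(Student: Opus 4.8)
The plan is to argue by contradiction along the same lines as Claim~\ref{claim1}, but now accounting for the fact that the offending edge $g$ lands in $Y_k \subseteq Y$ rather than in $B \setminus Y$, so it hits a vertex already saturated by $R$; the point of Property~\ref{many_colors} and Property~\ref{good_edges} is precisely to let us keep ``unwinding'' the configuration until we escape $Y$. Concretely, suppose $F_{\pi(k)}$ has an edge $g = vw$ with $v \in A \setminus (X \cup \{z_1,\ldots,z_k\})$ and $w \in Y_k$. As in Claim~\ref{claim1}, first build the chain $k > j_1 > j_2 > \cdots > j_s = 0$ from Property~\ref{color_g} together with the distinctness of $z_k, z_{j_1}, \ldots, z_{j_{s-1}}$ from Property~\ref{different_endpoints}, so that swapping $\{e_k, e_{j_1}, \ldots, e_{j_{s-1}}\}$ out of $R$ and $\{g_k, g_{j_1}, \ldots, g_{j_{s-1}}, g\}$ in is rainbow; the only failure is that this new edge collection is not a matching, because $g$ meets the vertex $w \in Y_k = N_G(X_k | R)$, which is covered by the edge $e_w := x_w w \in R$ for the unique $x_w \in X_k$.

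Next I would use Property~\ref{many_colors} to replace $e_w$ as well. Let $F_j$ be the colour class of $R$ containing $e_w$; since $e_w$ is an edge of $F_j$ between $X_k$ and $Y_k$, Property~\ref{many_colors} (with $i = k$) gives an edge $e_w' \in F_j$ between $x_k$ and $B \setminus Y$. Wait --- but $x_k$ is already the endpoint of $e_k$, so I need to be slightly more careful: the cleaner route is to observe that $w \in Y_k$ and peel off one ``layer'' at a time. If $w \in Y_{k-1}$, repeat the Claim-\ref{claim1}-type unwinding one level down; if $w \in Y_k \setminus Y_{k-1}$, invoke Property~\ref{good_edges} with $i = k$: there is a vertex $u \in A \setminus (X \cup \{z_1,\ldots,z_{k-1}\})$ with $uw \in F_{\pi(k-1)}$. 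Then $uw$ can play the role that $g$ played, but one level down (colour $\pi(k-1)$ instead of $\pi(k)$), since $u$ avoids $X$ and $z_1, \ldots, z_{k-1}$ --- and $u \ne z_k$ can be arranged because $z_k w \notin F_{\pi(k-1)}$ unless $w = y_k$, which is excluded by Property~\ref{disjoint}. Iterating, we descend through $Y_k \supseteq Y_{k-1} \supseteq \cdots$, and since $Y_0 = \varnothing$ the process terminates: at the bottom level the analogue of $g$ is an edge of $F_{\pi(0)} = F_0$ landing outside $Y$, and combining it with the appropriate telescoping sub-chain of $g$'s and dropping the corresponding $e$'s yields a rainbow matching strictly larger than $R$, a contradiction.

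The main obstacle I anticipate is bookkeeping: keeping straight, at each level $i$ of the descent, which edges of $R$ are being removed and which new edges are being inserted, and verifying that the insert set is genuinely a matching (no repeated vertices) and genuinely rainbow (no repeated colour, using injectivity of $\pi$ and the nesting $g_{j_i} \in F_{\pi(j_{i+1})}$). In particular one must check that the vertex produced by Property~\ref{good_edges} at each step is not already used by an edge in the current swap set --- this is where the exclusions $A \setminus (X \cup \{z_1,\ldots,z_{i-1}\})$ in \ref{good_edges} and $A \setminus (X \cup \{z_1,\ldots,z_j\})$ in \ref{different_endpoints} are doing real work, and getting the indices to line up is the delicate part. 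Everything else is a routine ``augmenting path'' argument of exactly the flavour already used for Claim~\ref{claim1}.
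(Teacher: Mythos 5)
You start on exactly the right track and then abandon it for a spurious reason. The new edge $g=vw$ collides at $w$ with the $R$-edge $e:=x_ww\in F_j$, so $e$ must be removed from $R$, and Property~\ref{many_colors} (applicable because $e$ is an $F_j$-edge of $R$ between $X_k$ and $Y_k=N_G(X_k|R)$) supplies a replacement edge $\overline{e}\in F_j$ between $x_k$ and $B\setminus Y$. Your objection that ``$x_k$ is already the endpoint of $e_k$'' is not an obstruction: $e_k$ is one of the edges being removed in the swap, so $x_k$ is freed and can receive $\overline{e}$, whose $B$-endpoint lies in $B\setminus Y$ while every other inserted edge ends in $Y$. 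This is precisely the paper's proof: $(R\setminus\{e_k,e_{j_1},\ldots,e_{j_{s-1}},e\})\cup\{g_k,g_{j_1},\ldots,g_{j_{s-1}},\overline{e},g\}$ is a rainbow matching of size $n$; the colour $j$ of the deleted edge $e$ is restored by $\overline{e}$, and $j\notin\{\pi(1),\ldots,\pi(k)\}$ because $e\notin\{e_1,\ldots,e_k\}$ by Property~\ref{disjoint}. Had you pushed through your first idea, you would have been done.

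The fallback ``descent'' you propose instead has a genuine gap. Property~\ref{good_edges} produces edges $uw$ with the \emph{same} endpoint $w\in Y_k\subseteq Y$ at every level, so the process never escapes $Y$; your assertion that at the bottom level the analogue of $g$ is an edge landing outside $Y$ is false. Moreover, substituting $uw\in F_{\pi(k-1)}$ for $g$ neither frees $w$ (still blocked by $e$) nor restores the colour $j$ once $e$ is deleted --- and $j$ is in general none of the $\pi(i)$, so no edge of your chain can supply it. The layer decomposition of $Y_k$ via Property~\ref{good_edges} is the right tool later, in Claim~\ref{claim3}, where the deleted $R$-edge sits inside $R[X_{k+1}',Y_{k+1}']$ and one needs a covering edge of colour $\pi(j_1-1)$ at $w$; for Claim~\ref{claim2} the one-step replacement via Property~\ref{many_colors} is what is needed.
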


\begin{proof}[Proof of Claim~\ref{claim2}]
Assume for a contradiction that there is an edge $g\in F_{\pi(k)}$ between the sets $A\setminus (X\cup \{z_1,\ldots,z_k\})$ and $Y_k$. 
(See Figure~\ref{fig:cl2} for an illustration.) 
Let $e$ be the unique edge in $R$ which is adjacent to $g$.
Observe that $e$ lies between $X_k$ and $Y_k$ by assumption. 
Let $j\in [n-1]$ be such that $e\in F_j$.
By Property~\ref{disjoint} we have $e\notin \{e_1,\ldots,e_k\}$. Thus, using Property~\ref{color_e} and the fact that $R$
is a rainbow matching, we can conclude that $j\notin \{\pi(i):1\leq i\leq k\}$. 
Now, by Property~\ref{many_colors} it holds that there is an edge $\overline{e}\in F_j$ between $x_k$ and $B\setminus Y$.
Moreover, by Properties~\ref{color_g} and~\ref{different_endpoints}, we
find a sequence $k>j_1>j_2>\ldots>j_s=0$ with $1\leq s\leq k$ such that
\begin{align*}
g_k  & \in  F_{\pi(j_1)}\ , \\
g_{j_i} & \in  
F_{\pi(j_{i+1})} \text{\ \ for }i< s
\end{align*}
and all vertices $z_k,z_{j_1},\ldots,z_{j_{s-1}}$ are distinct. Therefore, using Property~\ref{color_e}, we conclude that
$$(R\setminus \{e_k,e_{j_1},\ldots,e_{j_{s-1}},e\})\cup \{g_k,g_{j_1},\ldots,g_{j_{s-1}},\overline{e},g\}$$
forms a rainbow matching which is larger than $R$, a contradiction.
\end{proof}
\begin{figure} [htbp]
\centering
\includegraphics[scale=0.55]{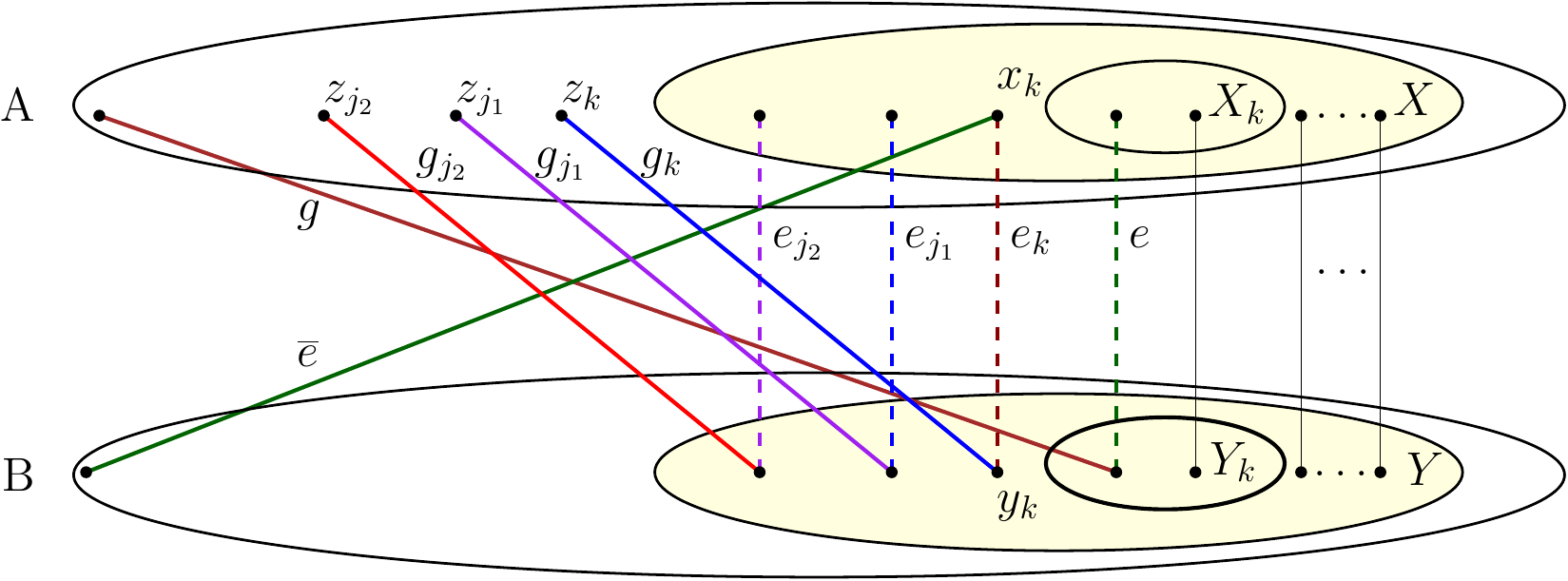}\label{fig:cl2}
 \caption{\em Example with $g_{j_2}\in F_{\pi(0)}$ ($s=3$). The dotted edges 
$\{e_k,e_{j_1},e_{j_2},e\}$ are replaced by the edges $\{g_k,g_{j_1},g_{j_2},\overline{e},g\}$
to obtain a larger rainbow matching.}
 \end{figure}

\begin{corollary}\label{existence_Nk}
The matching $F_{\pi(k)}$ has at least $\left(\frac{1}{2}+\varepsilon\right)n+1-2k$ edges between \linebreak
$A\setminus (X\cup \{z_1,\ldots,z_k\})$ and $Y\setminus (Y_k\cup \{y_1,\ldots,y_k\})$.
\end{corollary}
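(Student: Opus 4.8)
The plan is to count the edges of $F_{\pi(k)}$ according to where their endpoints lie, and then subtract off the edges that land in the "forbidden" regions, which are precisely those ruled out by Claims~\ref{claim1} and~\ref{claim2}. I would start from the simple observation that $|F_{\pi(k)}|\geq (3/2+\varepsilon)n$ and partition the edges of $F_{\pi(k)}$ according to the side on which each endpoint sits relative to $A\setminus(X\cup\{z_1,\ldots,z_k\})$ versus $X\cup\{z_1,\ldots,z_k\}$ in $A$, and relative to the partition of $B$ into $Y_k$, $\{y_1,\ldots,y_k\}\setminus Y_k$, and the rest. The key point is that $|X\cup\{z_1,\ldots,z_k\}|\leq |X|+k = (n-1)+k$, so this set can absorb at most $n-1+k$ edges of the matching $F_{\pi(k)}$; that alone leaves at least $(3/2+\varepsilon)n-(n-1)-k = (1/2+\varepsilon)n+1-k$ edges of $F_{\pi(k)}$ with their $A$-endpoint in $A\setminus(X\cup\{z_1,\ldots,z_k\})$.

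Next I would argue that among those remaining edges, the $B$-endpoint cannot lie in $B\setminus Y$ (this is exactly Claim~\ref{claim1}) and cannot lie in $Y_k$ (this is exactly Claim~\ref{claim2}). Hence every such edge has its $B$-endpoint in $Y\setminus Y_k$. Finally, at most $k$ of these edges can have their $B$-endpoint among $\{y_1,\ldots,y_k\}$, since $F_{\pi(k)}$ is a matching and there are only $k$ such vertices. Removing those, at least $(1/2+\varepsilon)n+1-k-k = (1/2+\varepsilon)n+1-2k$ edges of $F_{\pi(k)}$ run between $A\setminus(X\cup\{z_1,\ldots,z_k\})$ and $Y\setminus(Y_k\cup\{y_1,\ldots,y_k\})$, which is exactly the claimed bound.

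I do not expect any real obstacle here: the statement is a bookkeeping consequence of the two claims just proved, and the only mild care needed is to keep track that the "$+1$" survives (it comes from $|F_{\pi(k)}|\geq (3/2+\varepsilon)n$ and $|X|=n-1$, giving a genuine $(1/2+\varepsilon)n+1$ before the two successive subtractions of $k$) and to be slightly careful that the subtractions corresponding to $\{z_1,\ldots,z_k\}$ on the $A$-side and $\{y_1,\ldots,y_k\}$ on the $B$-side are not double-counted — they are not, since one controls the $A$-endpoint and the other the $B$-endpoint. It is worth noting that $\{y_1,\ldots,y_k\}$ need not be disjoint from $Y_k$, but since we are subtracting an upper bound this causes no problem.
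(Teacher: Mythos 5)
Your proof is correct and follows essentially the same route as the paper: bound the number of $F_{\pi(k)}$-edges meeting $X\cup\{z_1,\ldots,z_k\}$ by $n-1+k$, invoke Claims~\ref{claim1} and~\ref{claim2} to force the remaining $B$-endpoints into $Y\setminus Y_k$, and subtract at most $k$ for $\{y_1,\ldots,y_k\}$. (In fact Property~\ref{disjoint} already gives $\{y_1,\ldots,y_k\}\cap Y_k=\varnothing$, but as you note this is irrelevant to the bound.)
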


\begin{proof}
As $|F_{\pi(k)}|\geq (3/2+\varepsilon)n$ and $|X\cup \{z_1,\ldots,z_k\}|\leq n-1+k$,
we conclude that at least $(1/2+\varepsilon)n+1-k$ edges of $F_{\pi(k)}$
are incident with vertices in $A\setminus (X\cup \{z_1,\ldots,z_k\})$. Each of these edges
intersects $Y\setminus Y_k$ by the previous claims
and thus the statement follows.
\end{proof}

In the following, let $N_k\subseteq Y\setminus (Y_k\cup \{y_1,\ldots,y_k\})$ be a set of size
$1/2+\varepsilon)n+1-2k$ such that for each vertex $w\in N_k$
there is a vertex $v\in A\setminus (X\cup \{z_1,\ldots,z_k\})$ with $vw\in F_{\pi(k)}$.
Such a set exists by the previous corollary.
Moreover, let $$Y_{k+1}':=Y_k\cup N_k$$
and let $X_{k+1}':= N_G(Y_{k+1}'|R)$ be the neighbourhood of $Y_{k+1}'$ with respect to $R$.
By Property~\ref{sizes}, and as $N_k\cap Y_k=\varnothing$, we obtain
\begin{align}
|X_{k+1}'|=|Y_{k+1}'|&=2k\varepsilon n + \frac{k(7-3k)}{2} + \left(\frac{1}{2}+\varepsilon\right)n+1-2k \nonumber \\
& = \frac{1}{2}n + (2k+1)\varepsilon n + \frac{-3k^2+3k+2}{2}\ . \label{sizeXY}\tag{$\ast$}
\end{align}

We now look at all matchings that have an edge in $R$ between $X_{k+1}'$ and $Y_{k+1}'$.
Formally, we consider
$$\cR_{k+1}:=\big\{F_j\in \cF: F_j\cap R[X_{k+1}',Y_{k+1}']\neq \varnothing\big\}\ .$$

\begin{claim}\label{claim3}
Every $F_j\in \cR_{k+1}$ has at least $s_{k+1}$
edges between $X_{k+1}'$ and $B\setminus Y$.
\end{claim}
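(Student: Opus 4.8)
The strategy mirrors the base case and the proof of Claims~\ref{claim1}–\ref{claim2}: fix an arbitrary $F_j \in \cR_{k+1}$, pick the unique edge $vw$ of $R$ lying in $R[X_{k+1}', Y_{k+1}'] \cap F_j$, and analyze where the rest of $F_j$'s edges can go. The point is that if $F_j$ had too few edges between $X_{k+1}'$ and $B \setminus Y$, then it would have many edges reaching outside $X_{k+1}'$, and some of those could be used — together with a switching chain along the $g_i$'s — to enlarge $R$. I would first dispose of the degenerate possibility and then count.

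**First steps.** Split on where $w$ lies. If $w \in N_k$, then by the defining property of $N_k$ there is a vertex $z \in A \setminus (X \cup \{z_1,\ldots,z_k\})$ with $zw \in F_{\pi(k)}$; if instead $w \in Y_k$, then $vw$ is an edge of $R$ between $X_k$ and $Y_k$, so Property~\ref{many_colors} gives an edge $\overline{e} \in F_j$ between $x_k$ and $B \setminus Y$ (after checking, exactly as in the proof of Claim~\ref{claim2}, that the colour $j$ of $vw$ is not among $\{\pi(1),\ldots,\pi(k)\}$, using Property~\ref{disjoint} and that $R$ is rainbow). In the second case one should also replace $v$ by $x_k$ in the bookkeeping. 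In either case, I now want to argue: $F_j$ has no edge $g$ between $A \setminus (X \cup \{z_1,\ldots,z_k, z\})$ (resp. $A \setminus (X \cup \{z_1,\ldots,z_k\})$) and $B \setminus Y$, because such a $g$ would let us run the switching chain $g_k \in F_{\pi(j_1)}, g_{j_i} \in F_{\pi(j_{i+1})}, \ldots$ down to $F_{\pi(0)} = F_0$ exactly as in Claim~\ref{claim1}, then splice in $zw$ (using colour $\pi(k)$, freed because $g_k$ replaces $e_k$) and finally $g$, producing a rainbow matching of size $n$. When $w \in Y_k$ we additionally insert $\overline{e}$ in place of the removed edge $vw$, as in Claim~\ref{claim2}.

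**The counting.** Having shown that every edge of $F_j$ leaving the at most $(n-1) + (k+1)$ vertices of $X \cup \{z_1,\ldots,z_k\} \cup \{z\}$ must land in $Y$ (indeed in $Y$, not $B \setminus Y$), I count: $|F_j| \ge (3/2 + \eps)n$, so at least $(1/2 + \eps)n + 1 - (k+1)$ edges of $F_j$ are incident to $A \setminus (X \cup \{z_1,\ldots,z_k,z\})$, and all of these meet $Y$. Now I subtract the edges of $F_j$ that could fail to land in $X_{k+1}'$: at most $|Y \setminus Y_{k+1}'|$ such, but more usefully I note $F_j$ can meet $Y \setminus X_{k+1}'$ in at most one edge per vertex; the clean way is to observe that the edges of $F_j$ incident to $A \setminus (X \cup \{z_1,\ldots,z_k,z\})$ whose other endpoint lies in $Y_{k+1}'$ number at least $(1/2+\eps)n + 1 - (k+1) - |Y \setminus Y_{k+1}'|$. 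Using $|Y| < n$ and $|Y_{k+1}'| = \frac12 n + (2k+1)\eps n + \frac{-3k^2+3k+2}{2}$ from \eqref{sizeXY}, this simplifies to a bound of the form $2(k+1)\eps n + \frac{(k+1)(7-3(k+1))}{2} = s_{k+1}$, which is exactly the target; each such edge has its $Y$-endpoint in $Y_{k+1}'$ and its $A$-endpoint outside $X_{k+1}' = N_G(Y_{k+1}'|R)$? — no, I must be careful: I want edges landing in $X_{k+1}'$ on the $B$-side, i.e. with $B$-endpoint in $Y_{k+1}'$... in fact $X_{k+1}' \subseteq A$, so "between $X_{k+1}'$ and $B \setminus Y$" means $A$-endpoint in $X_{k+1}'$. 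So instead I count edges of $F_j$ with $A$-endpoint in $X_{k+1}'$ and $B$-endpoint outside $Y$: since at least $(1/2+\eps)n+1-(k+1)$ edges of $F_j$ avoid $X \setminus X_{k+1}'$ plus the $k+1$ extra vertices on the $A$-side and must then meet $B \setminus Y$ only through $X_{k+1}'$, and at most $|Y| - |Y_{k+1}'| < n - |Y_{k+1}'|$ of the $F_j$-edges incident to $A \setminus (X \cup \cdots)$... Let me restate cleanly: the edges of $F_j$ incident to $A\setminus(X\cup\{z_1,\dots,z_k,z\})$ either land in $Y_{k+1}'$ or in $Y\setminus Y_{k+1}'$; by the switching argument none lands in $B\setminus Y$, and the ones landing in $Y_{k+1}'$ have their $Y$-side vertex matched by $R$ into $X_{k+1}'$, hence (by Property~\ref{many_colors}-type reasoning or directly, since their colour appears in $\cR_{k+1}$) force an $F_j$-edge from $X_{k+1}'$ to $B\setminus Y$; counting gives at least $(1/2+\eps)n+1-(k+1) - |Y\setminus Y_{k+1}'| \ge s_{k+1}$ edges of $F_j$ between $X_{k+1}'$ and $B\setminus Y$.

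**Main obstacle.** The delicate point is the very last counting step and the precise routing of which edges force which: one must carefully separate, among the many edges of $F_j$ incident to fresh $A$-vertices, those whose $R$-partner sits in $Y_{k+1}'$ (these are the ones for which the switching argument — combined with the hypothesis $F_j \in \cR_{k+1}$, so that $\pi(k)$, $j$, and the chain colours are all available — forces a partner edge of $F_j$ from $X_{k+1}'$ into $B \setminus Y$), and to verify the arithmetic with \eqref{sizeXY} and $|Y| < n$ yields exactly the bound $s_{k+1}$ rather than something slightly weaker; getting the $-2k$ versus $-(k+1)$ losses and the two sub-cases ($w \in N_k$ needing the extra vertex $z$, $w \in Y_k$ needing the substitution $\overline{e}$ and the index check) to both land on the same clean threshold is where the care is required, and is presumably why the claim is isolated.
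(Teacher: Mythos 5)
Your overall plan (confine the edges of $F_j$ by a switching argument, then count) is the paper's, but two essential steps are broken. First, the case $w\in Y_k$: you propose to invoke Property~\ref{many_colors} to get $\overline{e}\in F_j$ at $x_k$ and then run the Claim~\ref{claim2}-style exchange, inserting both $\overline{e}$ and the hypothetical bad edge $g$ of $F_j$. That cannot work: $\overline{e}$ and $g$ (and the removed edge $f=vw$) all have colour $j$, so the resulting edge set contains two edges of the same colour and is not a rainbow matching. (In Claim~\ref{claim2} the bad edge $g$ has colour $\pi(k)$ and only $e,\overline{e}$ share colour $j$; here the bad edge itself is in $F_j$, so the structure is different.) The correct tool for $w\in Y_k$ is Property~\ref{good_edges}: since $w\in Y_{j_1}\setminus Y_{j_1-1}$ for some $j_1\in[k]$, there is $z\in A\setminus(X\cup\{z_1,\ldots,z_{j_1-1}\})$ with $zw\in F_{\pi(j_1-1)}$, and one chains down from index $j_1-1$ (removing $e_{j_1-1},e_{j_2},\ldots,f$ and adding $g_{j_1-1},g_{j_2},\ldots,\overline{f},zw$), exactly parallel to your (correct) treatment of the case $w\in N_k$. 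Property~\ref{many_colors} plays no role in this claim.

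Second, the counting. The claim asks for edges of $F_j$ with $A$-endpoint in $X_{k+1}'$ and $B$-endpoint in $B\setminus Y$, but your final count is of edges of $F_j$ incident to $A\setminus(X\cup\{z_1,\ldots,z_k,z\})$ whose $B$-endpoint lies in $Y_{k+1}'$ --- a disjoint family of edges, since their $A$-endpoints are by definition outside $X\supseteq X_{k+1}'$. The bridge you suggest (each such edge ``forces an $F_j$-edge from $X_{k+1}'$ to $B\setminus Y$'') has no justification: Property~\ref{many_colors} concerns the colour of the $R$-edge at the relevant vertex (not $F_j$) and in any case would only ever produce a single edge at a single vertex $x_i$, not one per counted edge. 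The count must instead go through the $B$-side: $|F_j|\geq(3/2+\eps)n$ and $|Y|\leq n-1$ give at least $(1/2+\eps)n+1$ edges of $F_j$ into $B\setminus Y$; by part (i) every such edge has its $A$-endpoint in $X\cup\{z_1,\ldots,z_k,z\}$; and at most $|X\cup\{z_1,\ldots,z_k,z\}|-|X_{k+1}'|\leq \frac12 n-(2k+1)\eps n+\frac{3k^2-k-2}{2}$ of them can avoid $X_{k+1}'$, leaving at least $s_{k+1}$ as required. Your $A$-side tally, even though its numerics land near $s_{k+1}$, bounds the wrong quantity.
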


\begin{proof}
The main argument is similar to that of Claim \ref{claim1} - Corollary \ref{existence_Nk}.
For $F_j\in \cR_{k+1}$  let $f=vw$, with $v\in X_{k+1}',\ w\in Y_{k+1}'$, denote the unique edge
in  $F_j\cap R[X_{k+1}',Y_{k+1}']$.
Since $Y_{k+1}':=Y_k\cup N_k$, we either have $w\in Y_k$ or $w\in N_k$.
In particular, by Property~\ref{disjoint} from the hypothesis and by the definition of $N_k$,
we know that $w\notin \{y_1,\ldots,y_k\}$, and therefore $j\notin \{\pi(i): 0\leq i\leq k\}$.

If $w\in Y_k$, then we find an integer $j_1\in [k]$ such that $w\in Y_{j_1}\setminus Y_{j_1-1}$
since $Y_k=\bigcup_{i\in [k]} Y_i\setminus Y_{i-1}$,
and by Property~\ref{good_edges} there is a vertex $z\in A\setminus (X\cup \{z_1,\ldots,z_{j_1-1}\})$
such that $zw\in F_{\pi(j_1-1)}$.\\
If otherwise $w\in N_k$,
we find a vertex  $z\in A\setminus (X\cup \{z_1,\ldots,z_k\})$
such that $zw\in F_{\pi(k)}$, by construction of $N_k$. In either case, let us fix this particular vertex $z$.
We now prove the claim by showing first that (i) $F_j$ has no edge between $A\setminus (X\cup \{z_1,\ldots,z_k,z\})$ and $B\setminus Y$, and then we conclude that (ii) the statement holds for $F_j$.

We start with the discussion of~(i). So, assume that
$F_j$ has an edge $\overline{f}$ between $A\setminus (X\cup \{z_1,\ldots,z_k,z\})$ and $B\setminus Y$.

If $w\in Y_k$, then by the definition of $z$ we have $zw\in F_{\pi(j_1-1)}$, with $j_1$ being defined above.
We can assume that $j_1> 1$, as otherwise $zw\in F_0$ and thus 
$(R\setminus \{f\})\cup \{\overline{f},zw\}$ forms a full rainbow matching, in contradiction to our main assumption.
But then, using Property~\ref{color_g}, we find a sequence $j_1-1>j_2>\ldots >j_s=0$ with $2\leq s<k$ such that 
\begin{align*}
g_{j_1-1}  & \in  F_{\pi(j_2)}\ , \\
g_{j_i} & \in  F_{\pi(j_{i+1})} \text{\ \ for }2\leq i\leq s-1
\end{align*}
and, by Property~\ref{different_endpoints} and since $z\in A\setminus (X\cup \{z_1,\ldots,z_{j_1-1}\})$,
all the vertices $z,z_{j_1-1},z_{j_2},\ldots,z_{j_{s-1}}$ are distinct. We thus find the rainbow matching
$$(R\setminus \{e_{j_1-1},e_{j_2},\ldots,e_{j_{s-1}},f\})\cup \{g_{j_1-1},g_{j_2},\ldots,g_{j_{s-1}},\overline{f},zw\}$$
which is larger than $R$, a contradiction.

\begin{figure} [htbp]
\centering
\includegraphics[scale=0.55]{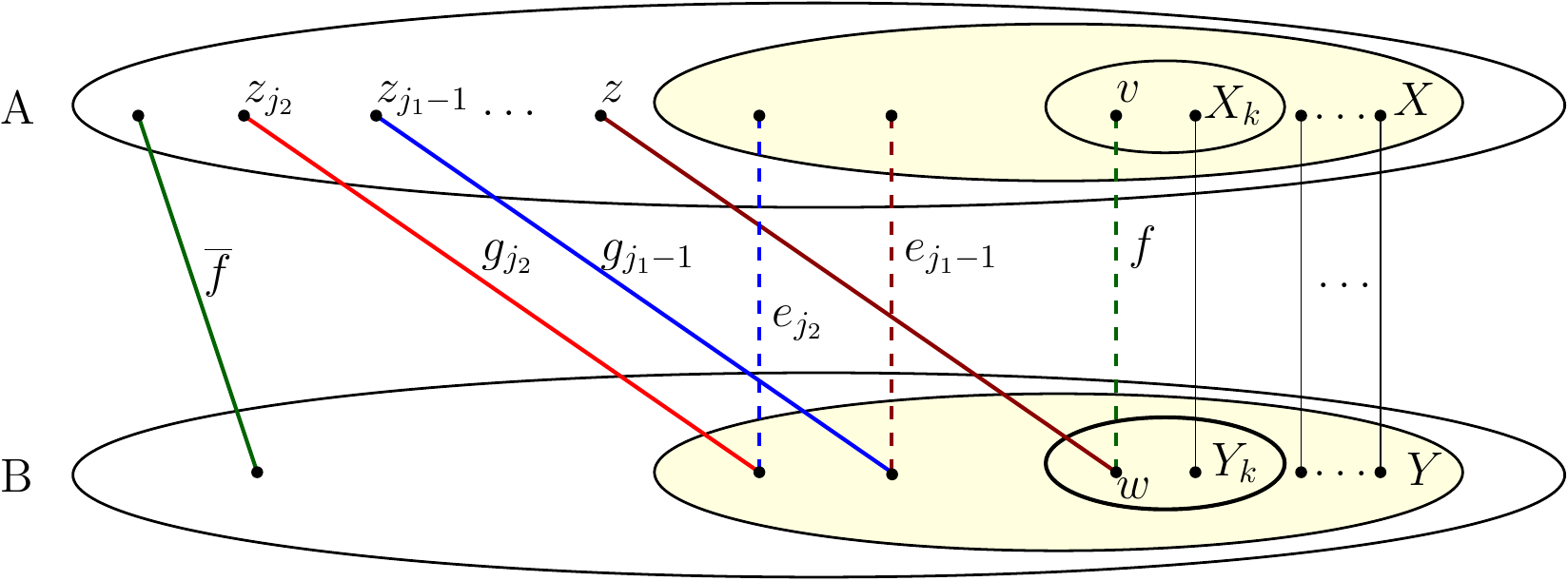}\label{fig:cl3a}
 \caption{\em Example with $g_{j_2}\in F_{\pi(0)}$, in case $w\in Y_k$. The dotted edges 
$\{e_{j_1-1},e_{j_2},f\}$ are replaced by the edges $\{g_{j_1-1},g_{j_2},\overline{f},zw\}$
to obtain a larger rainbow matching.}
 \end{figure}

If otherwise $w\in N_k$, then $zw\in F_{\pi(k)}$. Analogously we find
a sequence $k>j_1>j_2>\ldots>j_s=0$ with $1\leq s\leq k$ such that
$g_{k} \in  F_{\pi(j_1)}$ and $g_{j_i} \in F_{\pi(j_{i+1})}$ for $i<s$,
and we obtain a contradiction as
$$(R\setminus \{e_{k},e_{j_1},\ldots,e_{j_s},f\})\cup \{g_{k},g_{j_1},\ldots,g_{j_s},\overline{f},zw\}$$
forms a rainbow matching which is larger than $R$. Thus, we are done with part (i).

Let us proceed with (ii): $F_j$ needs
to saturate at least $(1/2 + \varepsilon )n+1$ vertices of $B\setminus Y$,
as $|F_j|\geq (3/2 + \varepsilon )n$ and $|Y|\leq n-1$.
Thus, by part (i),
we have at least $(1/2 + \varepsilon )n+1$ edges of $F_j$
between $X\cup \{z_1,\ldots,z_k,z\}$ and $B\setminus Y$. Using (\ref{sizeXY}), we further calculate
that
\begin{align*}
|X\cup \{z_1,\ldots,z_k,z\}|-|X_{k+1}'| & \leq (n+k) - \left(\frac{1}{2}n + (2k+1)\varepsilon n + \frac{-3k^2+3k+2}{2} \right)\\
& = \frac{1}{2}n - (2k+1)\varepsilon n + \frac{3k^2-k-2}{2} \ .
\end{align*}
Thus, the number of edges in $F_j$ between $X_{k+1}'$ and $B\setminus Y$ needs to be at least
\begin{align*}
\left( \frac{1}{2} + \varepsilon \right)n+1 - \left( \frac{1}{2}n - (2k+1)\varepsilon n + \frac{3k^2-k-2}{2} \right) = s_{k+1}\ ,
\end{align*}
as claimed.
\end{proof}

We now proceed with the construction of the edges  $e_{k+1}, g_{k+1}$ and the sets $X_{k+1}, Y_{k+1}$,
and afterwards we show that all required properties are maintained. The next corollary is by the pigeonhole
principle an immediate consequence of Claim~\ref{claim3}.

\begin{corollary}\label{sequence}
There exists a vertex $x_{k+1}\in X_{k+1}'$, a set $X_{k+1}\subseteq X_{k+1}'$ of size $s_{k+1}$
and its neighborhood $Y_{k+1}\subseteq Y_{k+1}'$ with respect to $R$ such that the following holds
for every $j\in[n-1]$: 
If $F_j\cap R[X_{k+1},Y_{k+1}]\neq \varnothing$, then $F_j$ has an edge
between $x_{k+1}$ and $B\setminus Y$. \hfill \qed
\end{corollary}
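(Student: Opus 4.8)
The plan is to derive Corollary~\ref{sequence} from Claim~\ref{claim3} by a direct double-counting (pigeonhole) argument, entirely parallel to the one carried out in the base case of the induction. First I would record the identity $|\cR_{k+1}|=|X_{k+1}'|$: since $X_{k+1}'=N_G(Y_{k+1}'|R)$, the edge set $R[X_{k+1}',Y_{k+1}']$ is exactly the set of $R$-edges incident to $Y_{k+1}'$; it is a perfect matching between $X_{k+1}'$ and $Y_{k+1}'$, which have the same size by~(\ref{sizeXY}); and since $R$ is rainbow, these $|X_{k+1}'|$ edges lie in $|X_{k+1}'|$ distinct colour classes. Writing $v_j$ for the endpoint in $X_{k+1}'$ of the unique $R$-edge of a matching $F_j\in\cR_{k+1}$, the vertices $v_j$ are pairwise distinct (for $i\neq j$ the unique $R$-edge at a common endpoint would belong to both $F_i$ and $F_j$), so in fact $\{v_j:F_j\in\cR_{k+1}\}=X_{k+1}'$.

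Next I would run the counting. For each $F_j\in\cR_{k+1}$ let $A_j\subseteq X_{k+1}'$ denote the set of endpoints in $X_{k+1}'$ of the edges of $F_j$ between $X_{k+1}'$ and $B\setminus Y$; by Claim~\ref{claim3} we have $|A_j|\ge s_{k+1}$, and $v_j\notin A_j$ because the $F_j$-edge at $v_j$ ends in $Y$. Counting the pairs $(x,F_j)$ with $x\in A_j$ in two ways gives $\sum_{F_j\in\cR_{k+1}}|A_j|\ge|\cR_{k+1}|\,s_{k+1}=|X_{k+1}'|\,s_{k+1}$, so by pigeonhole there is a vertex $x_{k+1}\in X_{k+1}'$ lying in at least $s_{k+1}$ of the sets $A_j$; equivalently, at least $s_{k+1}$ matchings of $\cR_{k+1}$ have an edge between $x_{k+1}$ and $B\setminus Y$. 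I would fix a family $\mathcal{G}\subseteq\cR_{k+1}$ of exactly $s_{k+1}$ such matchings, set $X_{k+1}:=\{v_j:F_j\in\mathcal{G}\}$ (which has size $s_{k+1}$ since the $v_j$ are distinct), and let $Y_{k+1}:=N_G(X_{k+1}|R)$, which is contained in $N_G(X_{k+1}'|R)=Y_{k+1}'$.

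It then remains to check the stated property (and, for later use, that $x_{k+1}\notin X_{k+1}$). For the latter: $x_{k+1}=v_{j_0}$ for some $F_{j_0}\in\cR_{k+1}$, and $x_{k+1}\notin A_{j_0}$, so $F_{j_0}\notin\mathcal{G}$ and hence $x_{k+1}=v_{j_0}\notin X_{k+1}$. For the former, let $j\in[n-1]$ with $F_j\cap R[X_{k+1},Y_{k+1}]\neq\varnothing$. Then $F_j$ has an $R$-edge incident to $X_{k+1}\subseteq X_{k+1}'$, so $F_j\in\cR_{k+1}$ with $v_j\in X_{k+1}$; since distinct colour classes have distinct associated vertices $v$, the membership $v_j\in X_{k+1}=\{v_i:F_i\in\mathcal{G}\}$ forces $F_j\in\mathcal{G}$, which by the choice of $\mathcal{G}$ yields $x_{k+1}\in A_j$, i.e.\ $F_j$ has an edge between $x_{k+1}$ and $B\setminus Y$, as required. (For $F_j\notin\cR_{k+1}$ the hypothesis is false, as $R[X_{k+1},Y_{k+1}]\subseteq R[X_{k+1}',Y_{k+1}']$.) I do not anticipate a real obstacle here: the only delicate point is that the counting must close with no slack, and this is precisely what the identity $|\cR_{k+1}|=|X_{k+1}'|$ together with the bijectivity of $F_j\mapsto v_j$ on $\cR_{k+1}$ provide, since they allow a single popular vertex $x_{k+1}$ to be promoted to a set $X_{k+1}$ of the full target size $s_{k+1}$.
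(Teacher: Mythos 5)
Your proof is correct and is exactly the pigeonhole/double-counting argument the paper has in mind (it declares the corollary an ``immediate consequence of Claim~\ref{claim3} by the pigeonhole principle'' and leaves the details, which mirror the base case, to the reader). Your additional observations --- that $F_j\mapsto v_j$ is a bijection from $\cR_{k+1}$ onto $X_{k+1}'$, so the count closes with no slack, and that $x_{k+1}\notin X_{k+1}$ --- are also consistent with the paper, which notes the latter separately right after the corollary.
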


To extend the sequences, choose $X_{k+1}$ and $Y_{k+1}$ according to Corollary \ref{sequence},
and let $e_{k+1}=x_{k+1}y_{k+1}$ be the unique edge in $R$ that is incident with $x_{k+1}$.
Note that $x_{k+1}\notin X_{k+1}$, as otherwise $x_{k+1}$ would need to be incident to two edges
of the same matching $F_j$.

Observe that $y_{k+1}\notin \{y_1,\ldots,y_k\}$. Indeed, $y_{k+1}\in Y_{k+1}'=Y_k\cup N_k$,
and by construction we have $N_k\cap \{y_1,\ldots,y_k\}=\varnothing$,
while $Y_k\cap \{y_1,\ldots,y_k\}=\varnothing$ holds by Property~\ref{disjoint}.

Now, let $e_{k+1}\in F_j$. As $e_{k+1}\in R\setminus \{e_1,\ldots,e_k\}$, we have
$j\notin \{\pi(i):\ 0\leq i\leq k\}$. We extend the injective function $\pi$ with $\pi(k+1)=j$.

Finally, we choose $g_{k+1}$ as follows: 
If $y_{k+1}\in N_k$, then by construction of $N_k$ there is a vertex $z_{k+1}\in A\setminus (X\cup \{z_1,\ldots,z_k\})$
with $z_{k+1}y_{k+1}\in F_{\pi(k)}$. Otherwise, if $y_{k+1}\in Y_k$,
then there is an $i\in [k]$ with $y_{k+1}\in Y_i\setminus Y_{i-1}$, and by Property~\ref{good_edges}
there is a vertex $z_{k+1}\in A\setminus (X\cup \{z_1,\ldots, z_{i-1}\})$ such that $z_{k+1}y_{k+1}\in F_{\pi(i-1)}$.
In any case, we set $g_{k+1}:=z_{k+1}y_{k+1}$.

\begin{claim}
\label{claim:properties}
The extended sequences satisfy Properties~\ref{color_e}-\ref{different_endpoints}.
\end{claim}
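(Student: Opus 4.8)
The plan is to verify Properties~\ref{color_e}--\ref{different_endpoints} for the enlarged sequences one at a time, using that for every index $i\le k$ the objects involved ($e_i$, $g_i$, $X_i$, $Y_i$, the vertices $z_1,\ldots,z_k$, and $\pi$ restricted to $\{0,\ldots,k\}$) are exactly those supplied by the induction hypothesis, so for those indices there is nothing to recheck; all the work is at $i=k+1$. One first records that the extension is of the prescribed type: $e_{k+1}=x_{k+1}y_{k+1}\in R$ with $x_{k+1}\in X_{k+1}'\subseteq X$ and $y_{k+1}\in Y$, $g_{k+1}=z_{k+1}y_{k+1}$ with $z_{k+1}\in A\setminus X$, $X_{k+1}\subseteq X_{k+1}'\subseteq X$, $Y_{k+1}=N_G(X_{k+1}|R)\subseteq Y_{k+1}'\subseteq Y$, and, since $e_{k+1}\in R\setminus\{e_1,\ldots,e_k\}$ and $R$ is rainbow, $\pi(k+1)$ is a colour not among $\pi(0),\ldots,\pi(k)$, so $\pi$ remains injective.

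Most properties are then immediate. Property~\ref{color_e} at $i=k+1$ holds by the choice of $\pi(k+1)$; Property~\ref{many_colors} at $i=k+1$ is literally Corollary~\ref{sequence}; Property~\ref{good_edges} at $i=k+1$ follows from $Y_{k+1}\setminus Y_k\subseteq Y_{k+1}'\setminus Y_k=N_k$ and the defining feature of $N_k$ (each $w\in N_k$ has a neighbour in $A\setminus(X\cup\{z_1,\ldots,z_k\})$ along an edge of $F_{\pi(k)}$). Property~\ref{sizes} combines $|X_{k+1}|=s_{k+1}$ from Corollary~\ref{sequence} with $|Y_{k+1}|=|X_{k+1}|$ (as $R$ is a matching) and the one-line identity $2(k+1)\varepsilon n+(k+1)(7-3(k+1))/2=s_{k+1}$, where $s_{k+1}$ is the quantity produced at the end of the proof of Claim~\ref{claim3}. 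For Property~\ref{disjoint} one checks $x_{k+1}\notin X_{k+1}$ (already noted) and $y_{k+1}\notin Y_{k+1}$, since the only $R$-edge at $y_{k+1}$ is $e_{k+1}=x_{k+1}y_{k+1}$, so $y_{k+1}\in N_G(X_{k+1}|R)$ would force $x_{k+1}\in X_{k+1}$; and for $i\le k$ one uses that $Y_{k+1}\subseteq Y_k\cup N_k$ avoids $\{y_1,\ldots,y_k\}$ (by the inductive Property~\ref{disjoint} and $N_k\subseteq Y\setminus(Y_k\cup\{y_1,\ldots,y_k\})$), hence $y_i\notin Y_{k+1}$, and since the $R$-edge at $x_i$ ends at $y_i\notin Y_{k+1}'$ also $x_i\notin N_G(Y_{k+1}'|R)=X_{k+1}'\supseteq X_{k+1}$.

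The only step that needs a moment of care is Property~\ref{different_endpoints} (together with the reasoning behind Property~\ref{color_g}), where one splits according to how $g_{k+1}$ was chosen. If $y_{k+1}\in N_k$ then $g_{k+1}\in F_{\pi(k)}$ with $z_{k+1}\in A\setminus(X\cup\{z_1,\ldots,z_k\})$; if $y_{k+1}\in Y_k$ then $g_{k+1}\in F_{\pi(i-1)}$ for the chosen $i\in[k]$ with $y_{k+1}\in Y_i\setminus Y_{i-1}$, with $z_{k+1}\in A\setminus(X\cup\{z_1,\ldots,z_{i-1}\})$. In both cases $g_{k+1}$ lies in some $F_{\pi(j_0)}$ with $0\le j_0\le k$ (giving Property~\ref{color_g}), and because every edge of the multigraph lies in exactly one colour class, injectivity of $\pi$ makes $j_0$ the only index $j\in\{0,\ldots,k\}$ with $g_{k+1}\in F_{\pi(j)}$. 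For Property~\ref{different_endpoints} it then suffices to have $z_{k+1}\in A\setminus(X\cup\{z_1,\ldots,z_{j_0}\})$ when $j_0\in[k]$, which is precisely what the construction of $z_{k+1}$ yields in each case, while if $j_0=0$ (possible only in the second case with $i=1$, i.e.\ $g_{k+1}\in F_0$) the statement is vacuous for every $j\in[k]$. I do not expect a genuine obstacle here: the claim is a bookkeeping step confirming that the construction carried out just above is exactly the one the induction was designed to produce, so that the induction closes and the contradiction stated right after Property~\ref{sizes} is reached.
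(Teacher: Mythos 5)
Your proposal is correct and follows essentially the same property-by-property verification as the paper's proof, in places spelling out details (e.g.\ why $y_{k+1}\notin Y_{k+1}$ and the uniqueness of the colour index $j_0$ in Property~\ref{different_endpoints}) that the paper leaves terse. No substantive difference in approach.
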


\begin{proof}
 Properties~\ref{color_e} and~\ref{color_g} follow immediately from the induction hypothesis and from the definition of $\pi(k+1)$ and $g_{k+1}$. 
 By construction, we have $Y_{k+1}\subseteq Y_{k+1}' = Y_k \cup N_k$. 
 By Property~\ref{disjoint} of the induction hypothesis and by the definition of $N_k$, we have $\{y_1,\ldots,y_k\} \cap Y_{k+1} = \varnothing$. 
 It follows from the construction of $X_{k+1}$ (Corollary~\ref{sequence}) that $y_{k+1} \notin Y_{k+1}$. 
 By symmetry, we have $\{e_1, \ldots, e_{k+1}\}\cap(X_{k+1}\cup Y_{k+1}) = \varnothing$, which shows Property~\ref{disjoint}. 
 Properties~\ref{sizes} and~\ref{many_colors} hold by Corollary~\ref{sequence} and by Property~\ref{many_colors} of the induction hypothesis. 
 Recall that $Y_{k+1} \setminus Y_{k} \subseteq N_k$. 
 This means that for every $w\in Y_{k+1}\setminus Y_k$ there exists a vertex $v \in A\setminus (X\cup \{z_1,\ldots, z_{k}\})$ such that $vw \in F_{\pi(k)}$, proving Property~\ref{good_edges}. 
 Finally, Property~\ref{different_endpoints} holds by the induction hypothesis and since we chose $z_{k+1}$ from a set $A\setminus (X\cup \{z_1,\ldots, z_{i-1}\})$ such that $z_{k+1}y_{k+1}\in F_{\pi(i-1)}$ for the appropriate $i \in [k+1]$. 
 Consequently, all Properties~\ref{color_e}-\ref{different_endpoints} are fulfilled by the extended sequences. 
\end{proof}
Claim~\ref{claim:properties} concludes the induction and thus the proof of Theorem~\ref{thm:main}. 
\end{prf}

\section{Open problems and concluding remarks}

In this paper we proved that a collection of $n$ matchings of size $\left(3/2 + o(1)\right)n$ in a bipartite multigraph guarantees a rainbow matching of size $n$. 
One of the obstacles why our proof does not work for smaller values is that it is not clear what matching sizes are sufficient for guaranteeing a rainbow matching of size $n-1$. 
More generally, as suggested by Tibor Szabó (private communication), it would be interesting to determine upper bounds on the smallest integer $\mu(n,\ell)$ such that every family of $n$ matchings of size $\mu(n,\ell)$ in a bipartite multigraph guarantees a rainbow matching of size $n-\ell$. One can verify that $\mu(n,l) \leq \frac{l+2}{l+1} n$. Moreover, it holds that $\mu(n, \sqrt{n})\leq n$, which is a generalization (see e.g.~\cite{aharoni2013}) of a result proved in the context of Latin squares by Woolbright~\cite{woolbright}, and independently by Brouwer, de Vries and Wieringa~\cite{brouwer1978}. 

In order to approach Conjecture~\ref{conj:aharoni}, one can also increase the number of matchings and fix their sizes to be equal to $n$ instead of considering families of $n$ matchings of sizes greater than $n$. 
Drisko~\cite{drisko1998} proved that a collection of $2n-1$ matchings of size $n$ in a bipartite multigraph with partite sets of size $n$ guarantees a rainbow mathching of size $n$. He also showed that this result is sharp. This problem can be further investigated in the following two directions. Does the statement also hold if we omit the restriction on the sizes of the vertex classes? And how many matchings do we need to find a rainbow matching of size $n-\ell$ for every $\ell \geq 1$? 

Finally, in case Conjecture~\ref{conj:aharoni} turns out to be true, it is of interest to see how sharp it is.
As shown by Barat and Wanless~\cite{barat2014}, one can find constructions of $n$ matchings with $\left\lfloor \frac{n}{2}\right\rfloor -1$ matchings of size $n+1$ and the remaining ones being of size $n$ such that there is no rainbow matching of size $n$. We wonder whether the expression
$\left\lfloor \frac{n}{2}\right\rfloor -1$ above could also be replaced by $(1-o(1))n$.

\bibliographystyle{abbrv} 	
\bibliography{rainbowmatchings}	

\end{document}